\newtheorem{theorem}{Theorem}[section]
\newtheorem{lemma}[theorem]{Lemma}
\theoremstyle{definition}
\newtheorem{corollary}[theorem]{Corollary}
\newtheorem{remark}[theorem]{Remark}
\theoremstyle{remark}
\newcommand{\be}{\begin{equation}}
\newcommand{\ee}{\end{equation}}
\numberwithin{equation}{section}
\begin{document}

\title{The rigidity of Dolbeault-type operators and symplectic circle actions }

\author{Ping Li}
\address{Department of Mathematics, Tongji University, Shanghai 200092, China}
\email{pingli@tongji.edu.cn}
\thanks{Research supported by Program for Young Excellent
Talents in Tongji University.}


\subjclass[2000]{37B05, 58J20, 32Q60, 37J10.}


\keywords{rigidity, Dolbeault-type operator, symplectic circle
action, Hamiltonian circle action}

\begin{abstract}
Following the idea of Lusztig, Atiyah-Hirzebruch and Kosniowski, we
note that the Dolbeault-type operators on compact, almost-complex
manifolds are rigid. When the circle action has isolated fixed
points, this rigidity result will produce many identities concerning
the weights on the fixed points. In particular, it gives a criterion
to detemine whether or not a symplectic circle action with isolated
fixed points is Hamiltonian. As applications, we simplify the proofs
of some known results related to symplectic circle actions, due to
Godinho, Tolman-Weitsman and Pelayo-Tolman, and generalize some of
them to more general cases.
\end{abstract}

\maketitle

\section{Introduction}
This paper is concerned with the rigidity of Dolbeault-type
operators on compact, almost-complex manifolds and its relationship
to symplectic circle actions.

Let $(M^{2n},\omega)$ be a compact, connected symplectic manifold of
dimension $2n$. A circle action is called \emph{symplectic} if it
preserves the symplectic form $\omega$ or, equivalently, if the one
form $\omega(X,\cdot)$ is closed, where $X$ is the generating vector
field of this circle action. This symplectic circle action is called
\emph{Hamiltonian} if $\omega(X,\cdot)$ is exact, i.e.,
$\omega(X,\cdot)=df$ for some smooth function $f$ on $M$. We call
$f$ the \emph{moment map} of this action, which is unique up to a
constant.

An obvious necessary condition for a symplectic circle action to be
Hamiltonian is to have \emph{non-empty} fixed points corresponding
to the critical points of $f$ (the minimum and the maximum of $f$
must be fixed points). In the case of K\"{a}hler manifolds \cite{Fr}
and of four-dimensional symplectic manifolds \cite{Mc}, this
condition is also sufficient. However, this is not true for general
higher dimensional symplectic manifolds. In fact McDuff \cite{Mc}
constructed a six-dimensional symplectic manifold with a symplectic
circle action which has fixed points, but which is not Hamiltonian.

Note that the fixed point sets of the counterexamples in \cite{Mc}
are tori. Hence one possible conjecture is that a symplectic action
with \emph{isolated} fixed points must be Hamiltonian. Some
partially affirmative results have been obtained: Tolman-Weitsman
\cite{TW} showed that this conjecture holds for semifree circle
actions (an action is called \emph{semifree} if it is free outside
the fixed points); Godinho \cite{Go} showed that this holds on
certain circle actions on six-dimensional symplectic manifolds with
specified weights on the fixed points.

McDuff \cite{Mc} showed that a symplectic circle action is
Hamiltonian \emph{if and only if} there is a connected component of
the fixed point set such that all the weights of the representation
of the circle action on the normal bundle of the component are
positive. Therefore in order to show that a symplectic circle action
is actually Hamiltonian, it suffices to show that there exists a
connected component of the fixed point set satisfying this
condition. The main ideas of the proof in \cite{TW} and \cite{Go}
are both based on this argument. In the isolated fixed points case,
Fel'dman \cite{Fe} refined McDuff's observation to show that the
Todd genus of a manifold admitting a symplectic circle action with
isolated fixed points is either 0, in which case the action is
non-Hamiltonian, or 1, in which case the action is Hamiltonian.

In a recent paper of Pelayo and Tolman \cite{PT}, the authors showed
that if a compact symplectic manifold admits a symplectic (not
necessarily Hamiltonian) circle action with isolated fixed points,
then the weights on the fixed points must satisfy some restrictions
(\cite{PT}, Theorem 2). We would like to point out that Theorems 1
and 3 in \cite{PT} are closely related to some much earlier work of
Kosniowski (\cite{Ko2}, \cite{Ko3}). Theorem 1 in \cite{PT} is
related to a conjecture of Kosniowski (\cite{Ko3}, p.338, Conjecture
A). Theorem 3 in \cite{PT} has been obtained in (\cite{Ko2}, Theorem
2) for complex manifolds and in (\cite{Ko3}, p.337) in the more
general case. The forms of the weights on the two fixed points are
hidden in the last paragraph of \cite{Ko2}.

Our paper is inspired by the above-mentioned results. This paper is
organized as follows. In Section 2, we consider some Dolbeault-type
elliptic operators on compact, almost-complex manifolds. If an
almost-complex manifold admits a circle action compatible with the
almost-complex structure, we can define the equivariant indices of
these operators under this circle action. Then following an idea of
Lusztig, Atiyah-Hirzebruch and Kosniowski, we will prove the
invariance of the  relevant equivariant indices of these
Dolbeault-type operators under circle actions having isolated fixed
points. This is the meaning of the word \emph{rigidity} in our
title. When an almost-complex manifold admits a compatible circle
action with isolated fixed points, this rigidity result immediately
produces many identities concerning the weights on the fixed points.
In particular, combining with Fel'dman's result \cite{Fe}, we give a
criterion to determine whether or not a symplectic circle action is
Hamiltonian.

In Section 3, as the first application of our result, we give a
simple and unified new proof of Godinho's result \cite{Go}, of which
the original proof is very complicated (see the whole Section 3 of
\cite{Go}). In fact our conclusion is more general than that of
Godinho. As the second main application, we also generalize Pelayo
and Tolman's above-mentioned result to circle actions on
almost-complex manifolds.

\begin{remark}
Given an elliptic operator, we say it is \emph{rigid} if, under
\emph{any} circle action, the corresponding equivariant index of
this operator is invariant. A survey of the results concerning the
rigidity of some important elliptic operators can be found in
Section $1$ of \cite{BT}.
\end{remark}
\section{The rigidity of Dolbeault-type operators and main results}
Let $(M^{2n},J)$ be a compact, almost-complex manifold of real
dimension $2n$ with an almost-complex structure $J$. The choice of
an almost Hermitian metric on $M$ enables us to define the Hodge
star operator $\ast$ and the formal adjoint
$\bar{\partial}^{\ast}=-\ast\bar{\partial}~\ast$ of the
$\bar{\partial}$-operator (\cite{GH}, p.80). Then for each $0\leq
p\leq n$, there is an elliptic differential operator (cf. \cite{LM},
p.258, Example 13.14)
\be\label{GDC}\bigoplus_{q~\textrm{even}}\Omega^{p,q}(M)\xrightarrow{\bar{\partial}+\bar{\partial}^{\ast}}\bigoplus_{q
~\textrm{odd}}\Omega^{p,q}(M),\ee where
$\Omega^{p,q}(M):=\Gamma(\Lambda^{p}T^{\ast}M\otimes\Lambda^{q}\overline{T^{\ast}M}).$
Here $T^{\ast}M$ is the dual of holomorphic tangent bundle $TM$ in
the sense of $J$. The index of this operator is denoted by
$\chi^{p}(M)$
($=\textrm{dim}_{\mathbb{C}}\textrm{ker}(\bar{\partial}+\bar{\partial}^{\ast})-\textrm{dim}_{\mathbb{C}}\textrm{coker}(\bar{\partial}+\bar{\partial}^{\ast})$)
in the notation of Hirzebruch \cite{Hi}. We define the Hirzebruch
$\chi_{y}$-genus $\chi_{y}(M)$ by
$$\chi_{y}(M)=\sum_{p=0}^{n}\chi^{p}(M)\cdot y^{p}.$$
\begin{remark}\begin{enumerate}
\item When
$J$ is integrable, i.e., $M$ is a $n$-dimensional complex manifold,
$\chi^{p}(M)$ equals to the index of the following well-known
Dolbeault complex
\be\label{DC}0\rightarrow\Omega^{p,0}(M)\xrightarrow{\bar{\partial}}\Omega^{p,1}(M)\xrightarrow{\bar{\partial}}\cdots\xrightarrow{\bar{\partial}}\Omega^{p,n}(M)\rightarrow
0\ee and hence $\chi^{p}(M)=\sum_{q=0}^{n}(-1)^{q}h^{p,q}(M),$ where
$h^{p,q}(M)$ is the corresponding Hodge numbers of $M$.

\item
For a general almost-complex manifold, $\bar{\partial}^{2}$ is not
identically zero (it is a well known fact that
$\bar{\partial}^{2}\equiv 0$ is equivalent to the integrality of
$J$). So we cannot define the Dolbeault complex (\ref{DC}).
Therefore (\ref{GDC}) may be taken as the Dolbeault-type complex in
the almost-complex case.

\item
Using the general form of the Riemann-Roch-Hirzebruch theorem (first
proved by Hirzebruch for projective manofolds \cite{Hi}, and in the
general case by Atiyah and Singer \cite{AS}), we have
$$\chi^{p}(M)=<\textrm{ch}(\Lambda^{p}T^{\ast}M)\textrm{td}(TM),[M]>,$$
where $\textrm{ch}(\cdot)$ is the Chern character and
$\textrm{td}(\cdot)$ is the Todd class. $[M]$ is the fundamental
class of $M$ induced from $J$ and $<\cdot,\cdot>$ is the Kronecker
pairing.
\end{enumerate}
\end{remark}

Now suppose $M$ admits a circle action ($S^{1}$-action) preserving
the almost-complex structure $J$. Then for any $g\in S^{1}$, we can
define the equivariant index $\chi^{p}(g,M)$ (resp. equivariant
Hirzebruch $\chi_{y}$-genus
$\chi_{y}(g,M):=\sum_{p=0}^{n}\chi^{p}(g,M)y^{p}$) of the elliptic
operators in (\ref{GDC}) by choosing an invariant almost Hermitian
metric under this circle action. Note that $\chi^{p}(g,M)$ is a
\emph{finite} Laurent series in $g$ as both
$\textrm{ker}(\bar{\partial}+\bar{\partial}^{\ast})$ and
$\textrm{coker}(\bar{\partial}+\bar{\partial}^{\ast})$ are
finite-dimensional.

Moreover, we assume the fixed points of this action are non-empty
and isolated, say $P_{1},\cdots,P_{m}$. At each $P_{i}$, there are
well-defined $n$ integer weights $k^{(i)}_{1},\cdots,k^{(i)}_{n}$
(not necessarily distinct) induced from the isotropy representation
of this $S^{1}$-action on $T_{p_{i}}M$. Note that these
$k^{(i)}_{1},\cdots,k^{(i)}_{n}$ are \emph{nonzero} as the fixed
points are isolated. We use $e_{i}(x_{1},\cdots,x_{n})$ ($1\leq
i\leq n$) to denote the $i$-th elementary symmetric polynomial of
$x_{1},\cdots,x_{n}$ and $e_{0}(x_{1},\cdots,x_{n}):=1$. With these
notations understood, the main observation in this paper is the
following
\begin{theorem}\label{main theorem}
Let $(M^{2n},J)$ be a compact, connected, almost-complex manifold
with a circle action preserving the almost-complex structure $J$.
Suppose the fixed points of this action are non-empty and isolated.
Let the notations be as above. Then for each $0\leq p\leq n$, the
expression
$$\sum_{i=1}^{m}\frac{e_{p}(g^{k^{(i)}_{1}},\cdots,g^{k^{(i)}_{n}})}{\prod_{j=1}^{n}(1-g^{k_{j}^{(i)}})}$$
is a constant and equals to $\chi^{p}(M)$. Here $g$ is an
indeterminate.
\end{theorem}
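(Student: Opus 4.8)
The plan is to apply the Atiyah--Bott--Segal--Singer Lefschetz fixed point formula to the equivariant index $\chi^p(g,M)$ of the elliptic operator \eqref{GDC}, and then to exploit rigidity (the invariance of this equivariant index under the circle action, established earlier in this section) to conclude that the resulting expression is a constant equal to the ordinary index $\chi^p(M)$.

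First I would write down the Lefschetz fixed point formula. Since the fixed points $P_1,\dots,P_m$ are isolated, the localization formula expresses $\chi^p(g,M)$ as a sum of local contributions, one per fixed point. At $P_i$, the tangent space $T_{P_i}M$ decomposes under the $S^1$-action into one-dimensional complex representations with weights $k^{(i)}_1,\dots,k^{(i)}_n$. The virtual bundle defining the operator \eqref{GDC} is $\bigoplus_q (-1)^q \Lambda^{p}T^\ast M\otimes\Lambda^{q}\overline{T^\ast M}$, whose equivariant Chern character, evaluated at $g$ on the fixed point data at $P_i$, gives $e_p\big(g^{k^{(i)}_1},\dots,g^{k^{(i)}_n}\big)$ in the numerator (coming from $\mathrm{ch}(\Lambda^p T^\ast M)$, i.e. the product $\prod_j (1+g^{-k^{(i)}_j}t_j)$ expanded to degree $p$, specialized appropriately; after the standard manipulation one obtains the $p$-th elementary symmetric function of the $g^{k^{(i)}_j}$), while the Todd-class/normal-bundle denominator contributes $\prod_{j=1}^n \big(1-g^{k^{(i)}_j}\big)^{-1}$, up to a sign bookkeeping that I would check carefully. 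The upshot is the identity
$$\chi^p(g,M)=\sum_{i=1}^{m}\frac{e_p\big(g^{k^{(i)}_1},\dots,g^{k^{(i)}_n}\big)}{\prod_{j=1}^{n}\big(1-g^{k^{(i)}_j}\big)},$$
valid for all $g\in S^1$ with $g$ not a root of unity dividing any weight (so that each $1-g^{k^{(i)}_j}\neq 0$).

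Next I would invoke rigidity: the equivariant index $\chi^p(g,M)$, a priori a finite Laurent polynomial in $g$, is in fact independent of $g\in S^1$, hence equals its value at $g=1$, which is the ordinary index $\chi^p(M)$. Therefore the rational function of $g$ on the right-hand side, which agrees with the constant $\chi^p(M)$ on an infinite subset of $S^1$, must be identically equal to $\chi^p(M)$ as a rational function in the indeterminate $g$. This gives exactly the stated conclusion.

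The main obstacle I anticipate is purely bookkeeping rather than conceptual: getting the local contribution at each $P_i$ into precisely the advertised form $e_p(g^{k^{(i)}_1},\dots,g^{k^{(i)}_n})/\prod_j(1-g^{k^{(i)}_j})$, with the correct signs and the correct convention for holomorphic versus antiholomorphic weights. One must be careful that $T^\ast M$ carries weights $-k^{(i)}_j$ while $\overline{T^\ast M}$ carries weights $+k^{(i)}_j$, and that the alternating sum over $q$ of $\mathrm{ch}(\Lambda^q \overline{T^\ast M})$ produces $\prod_j(1-g^{k^{(i)}_j})$, which then cancels against part of the Todd contribution and, combined with the $\mathrm{ch}(\Lambda^p T^\ast M)$ factor, leaves the elementary symmetric polynomial over the remaining denominator after multiplying numerator and denominator by suitable powers of $g$. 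I would present this normalization step explicitly and relegate the rest to the standard Lefschetz formula plus the rigidity statement already proved.
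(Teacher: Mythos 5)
Your first step (the Atiyah--Bott--Segal--Singer localization at the isolated fixed points, with the bookkeeping giving the local term $e_{p}(g^{k^{(i)}_{1}},\cdots,g^{k^{(i)}_{n}})/\prod_{j}(1-g^{k^{(i)}_{j}})$) matches the paper. But the second step contains a genuine gap: you ``invoke rigidity (the invariance of this equivariant index under the circle action, established earlier in this section)'' --- no such result is established earlier, in the paper or anywhere you point to. The constancy of $\chi^{p}(g,M)$ in $g$ \emph{is} the content of the theorem (this is exactly what the paper means by ``rigidity''), so as written your argument is circular: you assume the conclusion in order to derive it. The classical rigidity statements of Lusztig, Kosniowski and Atiyah--Hirzebruch are for complex manifolds, and the point of this theorem is precisely that the argument extends to almost-complex manifolds with isolated fixed points; you must therefore actually prove the constancy, not cite it.

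The missing argument is short but essential, and it is where the paper does its real work. One takes $g$ a topological generator of $S^{1}$ (note: your condition ``$g$ not a root of unity dividing any weight'' is not quite the right hypothesis for the fixed point formula, since the fixed set of such a $g$ can still be strictly larger than $M^{S^{1}}$; what you need is that the fixed set of $g$ equals $\{P_{1},\dots,P_{m}\}$, which holds for topological generators, and these are dense in $S^{1}$, which suffices). Then one packages the indices into the equivariant $\chi_{y}$-genus, so that the localization reads $\chi_{y}(g,M)=\sum_{i=1}^{m}\prod_{j=1}^{n}\frac{1+yg^{k^{(i)}_{j}}}{1-g^{k^{(i)}_{j}}}$, and observes: the left side is a finite Laurent polynomial in $g$, while the right side, viewed as a rational function, has finite limits as $g\to 0$ and $g\to\infty$, namely $\sum_{i}(-y)^{d_{i}}$ and $\sum_{i}(-y)^{n-d_{i}}$ where $d_{i}$ is the number of negative weights at $P_{i}$ (here one uses that all $k^{(i)}_{j}\neq 0$, i.e.\ that the fixed points are isolated). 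A finite Laurent polynomial with finite limits at $0$ and $\infty$ is constant, hence $\chi_{y}(g,M)\equiv\chi_{y}(\mathrm{id},M)=\chi_{y}(M)$; density of topological generators then upgrades the localization identity to an identity of rational functions in the indeterminate $g$, and comparing coefficients of $y^{p}$ gives the theorem. Without this limit argument (or some substitute for it), your proposal does not establish the constancy that the statement asserts.
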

Note that $\chi^{0}(M)$ is nothing else but the Todd genus of $M$.
Hence combining with Fel'dman's observation (\cite{Fe}, Theorems 1
and 2) mentioned in the Introduction, we have
\begin{theorem}
If $M$ is a symplectic manifold and the circle action is symplectic,
then the
expression
$$\sum_{i=1}^{m}\frac{1}{\prod_{j=1}^{n}(1-g^{k_{j}^{(i)}})}$$
is either $0$, in which case the action is non-Hamiltonian, or $1$,
in which case the action is Hamiltonian.
\end{theorem}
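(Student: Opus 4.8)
The plan is to read off this statement from Theorem~\ref{main theorem} in the case $p=0$, combined with Fel'dman's dichotomy for the Todd genus that was quoted in the Introduction. The first point to check is that a symplectic circle action is covered by the hypotheses of Theorem~\ref{main theorem}. Starting from any almost-complex structure compatible with $\omega$ and averaging the associated Riemannian metric over the compact group $S^{1}$, one produces an $S^{1}$-invariant almost Hermitian metric whose underlying almost-complex structure $J$ is simultaneously $\omega$-compatible and preserved by the action; moreover the orientation $J$ induces is the symplectic one, so $[M]$ is the usual fundamental class. With respect to this $J$ the action still has the same non-empty isolated fixed point set $P_{1},\dots,P_{m}$ and the same integer weights $k^{(i)}_{j}$, so Theorem~\ref{main theorem} applies verbatim.

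Next I would specialize Theorem~\ref{main theorem} to $p=0$. Since $e_{0}(x_{1},\dots,x_{n})=1$ by convention, every numerator in that theorem equals $1$, and the assertion becomes precisely that
$$\sum_{i=1}^{m}\frac{1}{\prod_{j=1}^{n}(1-g^{k_{j}^{(i)}})}$$
is a constant, equal to $\chi^{0}(M)$. By part~(3) of the Remark following the definition of $\chi^{p}$, one has $\chi^{0}(M)=\langle\textrm{td}(TM),[M]\rangle$, which is exactly the Todd genus of $M$.

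Finally I would invoke Fel'dman's result (\cite{Fe}, Theorems~1 and~2): a compact symplectic manifold carrying a symplectic circle action with isolated fixed points has Todd genus either $0$, in which case the action is non-Hamiltonian, or $1$, in which case it is Hamiltonian. Combined with the previous paragraph, this gives the claimed dichotomy for the displayed sum and finishes the proof. There is no genuine obstacle once Theorem~\ref{main theorem} is available; the only steps that deserve explicit mention are the construction of an invariant $\omega$-compatible $J$ (so that Theorem~\ref{main theorem} can be applied at all) and the identification of $\chi^{0}(M)$ with the Todd genus appearing in Fel'dman's statement.
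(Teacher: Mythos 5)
Your proposal is correct and matches the paper's argument: the paper likewise obtains this statement by taking $p=0$ in Theorem~2.1 (so the sum equals $\chi^{0}(M)$, the Todd genus) and then invoking Fel'dman's dichotomy for the Todd genus of a symplectic manifold with a symplectic circle action with isolated fixed points. Your extra remark about averaging to get an $S^{1}$-invariant $\omega$-compatible almost-complex structure is a standard point the paper leaves implicit, and it is handled correctly.
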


Our proof follows the exposition of Section 5.7 in \cite{HBJ}.
Although the considerations of Sections 5.6 and 5.7 in \cite{HBJ}
are for compact complex manifolds, we will see that, when replacing
the Dolbeault complexes (\ref{DC}) by the elliptic complexes
(\ref{GDC}), the proof can also be applied to the situation of
almost-complex manifolds with no more difficulties.

\emph{Proof of Theorem \ref{main theorem}.}

Let $g\in S^{1}$ be a topological generator, i.e., the closure of
$\{g^{r}~|~r\in\mathbb{Z}\}$ is the whole $S^{1}$. Then the fixed
point set of the action $g$ is exactly $\{P_{1}, \cdots,P_{m}\}$. We
note that the characteristic power series corresponding to the
Hirzebruch $\chi_{y}$-genus is $x\frac{1+ye^{-x}}{1-e^{-x}}$. Then
the Lefschetz fixed point formula of Atiyah-Bott-Segal-Singer
(\cite{AS}, p.256) tells us that
\be\label{LOC}\chi_{y}(g,M)=\sum_{i=1}^{m}\prod_{j=1}^{n}\frac{1+yg^{k^{(i)}_{j}}}{1-g^{k^{(i)}_{j}}}.\ee
Note that the left hand side (LHS) of (\ref{LOC}) is a \emph{finite}
Laurent series in $g$. Hence the only possible singularities are $0$
and $\infty$. While \be\label{lim}\lim_{g\rightarrow
0}\prod_{j=1}^{n}\frac{1+yg^{k^{(i)}_{j}}}{1-g^{k^{(i)}_{j}}}=(-y)^{d_{i}},~\lim_{g\rightarrow\infty}\prod_{j=1}^{n}\frac{1+yg^{k^{(i)}_{j}}}{1-g^{k^{(i)}_{j}}}=(-y)^{n-d_{i}}.\ee
Here $d_{i}$ is the number of the negative integers in $k_{1}^{(i)},
\cdots,k_{n}^{(i)}$. By (\ref{lim}), the RHS of (\ref{LOC}), and
hence the LHS of (\ref{LOC}), has well-defined limits at
$g=0,\infty$. Therefore $\chi_{y}(g,M)$ must be constant in $g$ and
$$\chi_{y}(g,M)\equiv\chi_{y}(id,M)=\chi_{y}(M),$$
which means
\be\label{identity}\chi_{y}(M)=\sum_{i=1}^{m}\prod_{j=1}^{n}\frac{1+yg^{k^{(i)}_{j}}}{1-g^{k^{(i)}_{j}}}\ee
holds for a dense subset of $S^{1}$ (the topological generators in
$S^{1}$ are dense). So this must be an identity in the indeterminate
$g$. Comparing the corresponding coefficients of (\ref{identity}),
we have
$$\chi^{p}(M)=\sum_{i=1}^{m}\frac{e_{p}(g^{k^{(i)}_{1}},\cdots,g^{k^{(i)}_{n}})}{\prod_{j=1}^{n}(1-g^{k_{j}^{(i)}})},~~0\leq p\leq n.$$
This completes the proof of Theorem \ref{main theorem}.

\begin{remark}
\begin{enumerate}
\item
Let $N_{P}$ denote the number of fixed points of the circle action
with exactly $p$ negative weights. From the proof we know that
\be\label{LOC2}
\begin{split}
\chi_{y}(M)&
=\sum_{i=1}^{m}(-y)^{d_{i}}=\sum_{i=1}^{m}(-y)^{n-d_{i}}\\
& =\sum_{p=0}^{n}N_{p}(-y)^{p}=\sum_{p=0}^{n}N_{p}(-y)^{n-p}.
\end{split}\ee
Hence\be\label{relation}\chi^{p}(M)=(-1)^{p}N_{p}=(-1)^{p}N_{n-p}=(-1)^{n}\chi^{n-p}(M).\ee

\item
As pointed out in Section 5.7 of \cite{HBJ}, this idea is
essentially due to Lusztig \cite{Lu}, Kosniowski \cite{Ko} and
Atiyah-Hirzebruch \cite{AH}. The former two used it to derive the
localization formula (\ref{LOC2}) for complex manifolds. Atiyah and
Hirzebruch used this idea in \cite{AH} to get their famous vanishing
theorem of $\hat{A}$-genus on spin manifolds.

\item
The localization formula (\ref{LOC2}) has been generalized to more
general cases (cf. \cite{HT} and \cite{KY}) since its first
appearance in \cite{Ko} on complex manifolds. To the author's
knowledge, in the case of almost-complex manifolds, the relations
between $\chi^{p}(M)$ and the weights as in Theorem \ref{main
theorem} have not been explicitly pointed out before. In fact, we
will see in the next section that, with these relations set up, we
can give many applications in related areas.
\end{enumerate}
\end{remark}
\section{Applications}
\subsection{}
Since the appearance of \cite{Mc}, symplectic circle actions on
six-dimensional symplectic manifolds have received a great deal of
attention due to the rich structures and possibilities in this
dimension. In \cite{Go}, Godinho showed that certain symplectic
circle actions on six-dimensional manifolds must be Hamiltonian.
More precisely, let the circle act symplectically on a
six-dimensional compact, connected, symplectic manifold with
non-empty isolated fixed points whose isotropy weights are of the
form $(\pm k_{1},\pm k_{2},\pm k_{3})$ for fixed integers $k_{1}\geq
k_{2}\geq k_{3}\geq 1$. Let $N_{0}$ and $N_{3}$ be the numbers of
the fixed points with the weights $(k_{1},k_{2},k_{3})$ and
$(-k_{1},-k_{2},-k_{3})$ respectively. Let $s_{1}, s_{2}, s_{3},
t_{1}, t_{2}$ and $t_{3}$ be the numbers of the weights
$(-k_{1},k_{2},k_{3}),$ $(k_{1},-k_{2},k_{3})$,
$(k_{1},k_{2},-k_{3})$, $(k_{1},-k_{2},-k_{3})$,
$(-k_{1},k_{2},-k_{3})$ and $(-k_{1},-k_{2},k_{3})$ respectively.
The following result is an extension of Godinho's results (cf.
\cite{Go}, Theorems 1.1, 1.2, 3.2 and 3.4)
\begin{theorem}
Let the circle act symplectically on a six-dimensional compact,
connected, symplectic manifold with non-empty isolated fixed points
and let the notations be as above, then there are exactly two
possibilities:
\begin{enumerate}
\item
$N_{0}=N_{3}=s_{2}=s_{3}=t_{2}=t_{3}=0$, $s_{1}=t_{1}\geq 1$ and
$k_{1}=k_{2}+k_{3}$,

in which case the action is non-Hamiltonian;

\item
$N_{0}=N_{3}=1$ and \be\begin{split} &
(g^{k_{3}}+g^{k_{2}}+g^{k_{1}})+(t_{1}g^{k_{2}+k_{3}}+t_{2}g^{k_{1}+k_{3}}+t_{3}g^{k_{1}+k_{2}})\\
=&(s_{3}g^{k_{3}}+s_{2}g^{k_{2}}+s_{1}g^{k_{1}})+(g^{k_{2}+k_{3}}+g^{k_{1}+k_{3}}+g^{k_{1}+k_{2}}),
\end{split}\nonumber\ee
in which case the action is Hamiltonian. Here of
course $g$ is an indeterminate.
\end{enumerate}
\end{theorem}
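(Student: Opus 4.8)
The plan is to feed the fixed-point data into Theorem~\ref{main theorem} for $p=0$ and $p=2$, combine this with~(\ref{relation}), and read off the two possibilities. First I would replace $\omega$ by a compatible $S^{1}$-invariant almost-complex structure, so that Theorem~\ref{main theorem} applies with exactly the prescribed weights. For a fixed point $P_{i}$ I write its weights as $\bigl(\epsilon_{1}^{(i)}k_{1},\epsilon_{2}^{(i)}k_{2},\epsilon_{3}^{(i)}k_{3}\bigr)$ with $\epsilon_{j}^{(i)}\in\{\pm1\}$, and set $d_{i}=\#\{\,j:\epsilon_{j}^{(i)}=-1\,\}$, $w_{i}=\sum_{j:\,\epsilon_{j}^{(i)}=-1}k_{j}$. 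From~(\ref{relation}) with $n=3$ I get $\chi^{0}(M)=N_{0}=N_{3}=:N$ and $N_{1}=N_{2}$; since $\chi^{0}(M)$ is the Todd genus, Fel'dman's observation (recorded in Section~2) forces $N\in\{0,1\}$, the action being non-Hamiltonian if $N=0$ and Hamiltonian if $N=1$. So everything reduces to determining the multiplicities $s_{j},t_{j}$ in each case.

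Next I would apply Theorem~\ref{main theorem} with $p=2$. Since every fixed point carries weights of the prescribed shape, $\prod_{j=1}^{3}\bigl(1-g^{k_{j}^{(i)}}\bigr)=(-1)^{d_{i}}g^{-w_{i}}\prod_{j=1}^{3}(1-g^{k_{j}})$, so multiplying the $p=2$ identity through by $D(g):=(1-g^{k_{1}})(1-g^{k_{2}})(1-g^{k_{3}})$ gives a genuine identity of Laurent polynomials
\[
N_{1}\,D(g)=\sum_{i=1}^{m}(-1)^{d_{i}}\,g^{w_{i}}\,e_{2}\bigl(g^{\epsilon_{1}^{(i)}k_{1}},\,g^{\epsilon_{2}^{(i)}k_{2}},\,g^{\epsilon_{3}^{(i)}k_{3}}\bigr).
\]
Expanding the right-hand side by sign pattern (the patterns occur with multiplicities $N_{0}$; $s_{1},s_{2},s_{3}$; $t_{1},t_{2},t_{3}$; $N_{3}$), only the monomials $g^{0},g^{k_{1}},g^{k_{2}},g^{k_{3}},g^{k_{1}+k_{2}},g^{k_{1}+k_{3}},g^{k_{2}+k_{3}},g^{k_{1}+k_{2}+k_{3}}$ occur. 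The extreme monomials $g^{0}$ and $g^{k_{1}+k_{2}+k_{3}}$ are isolated for \emph{any} admissible triple (as $0<k_{3}\leq k_{1}<k_{1}+k_{2}+k_{3}$), and their coefficients merely reproduce $N_{1}=N_{2}$; subtracting off these terms and substituting $N_{0}=N_{3}=N$, $N_{1}=N_{2}$, the identity becomes
\[
(N-s_{1})g^{k_{1}}+(N-s_{2})g^{k_{2}}+(N-s_{3})g^{k_{3}}+(t_{3}-N)g^{k_{1}+k_{2}}+(t_{2}-N)g^{k_{1}+k_{3}}+(t_{1}-N)g^{k_{2}+k_{3}}=0,
\]
still an identity of Laurent polynomials, which I will call $(\star)$.

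If $N=1$, then $(\star)$ is precisely the identity in part~(2), so that case is finished. If $N=0$, then $(\star)$ reads $s_{1}g^{k_{1}}+s_{2}g^{k_{2}}+s_{3}g^{k_{3}}=t_{3}g^{k_{1}+k_{2}}+t_{2}g^{k_{1}+k_{3}}+t_{1}g^{k_{2}+k_{3}}$, an identity of polynomials with \emph{nonnegative} coefficients, and here I would argue by degree. Since $k_{1}+k_{2}>k_{1}$ and $k_{1}+k_{3}>k_{1}$ while the left side has degree $\leq k_{1}$, nonnegativity (which precludes cancellation) forces $t_{2}=t_{3}=0$; if in addition $k_{2}+k_{3}>k_{1}$ it forces $t_{1}=0$ too, whence the left side also vanishes, so $N_{1}=N_{2}=0$ and, as $N_{0}=N_{3}=0$, the fixed-point set would be empty, contrary to hypothesis. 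Hence $k_{2}+k_{3}\leq k_{1}$ and $(\star)$ becomes $s_{1}g^{k_{1}}+s_{2}g^{k_{2}}+s_{3}g^{k_{3}}=t_{1}g^{k_{2}+k_{3}}$; if $k_{2}+k_{3}<k_{1}$ the monomial $g^{k_{2}+k_{3}}$ is distinct from $g^{k_{1}},g^{k_{2}},g^{k_{3}}$, again forcing $t_{1}=0$ and the same contradiction, so $k_{1}=k_{2}+k_{3}$. Then $(s_{1}-t_{1})g^{k_{1}}+s_{2}g^{k_{2}}+s_{3}g^{k_{3}}=0$ with $g^{k_{1}}$ distinct from $g^{k_{2}}$ and $g^{k_{3}}$, which yields $s_{1}=t_{1}$ and $s_{2}=s_{3}=0$; and $s_{1}=t_{1}=0$ is ruled out by non-emptiness, so $s_{1}=t_{1}\geq1$. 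This is exactly possibility~(1).

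I expect the only real difficulty to be the bookkeeping: one must check that the reduction to $(\star)$ and then the degree argument survive the coincidences that can arise among the exponents $k_{i}$ and $k_{i}+k_{j}$ (namely $k_{2}=k_{3}$, $k_{1}=k_{2}$, or $k_{1}=k_{2}+k_{3}$). The reduction to $(\star)$ is unconditional because the two monomials one discards, $g^{0}$ and $g^{k_{1}+k_{2}+k_{3}}$, are isolated for every admissible triple; and in the case $N=0$ every coefficient in play is nonnegative, so colliding monomials can only add, never cancel, which is exactly what makes the degree argument legitimate. (If, say, $k_{1}=k_{2}$, then splitting a fixed point between $s_{1}$ and $s_{2}$ is ambiguous, but only $s_{1}+s_{2}$ enters the identities since then $g^{k_{1}}=g^{k_{2}}$, so this causes no trouble.)
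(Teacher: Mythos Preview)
Your argument is correct and follows the paper's strategy: clear a common denominator in the rigidity formula, invoke Fel'dman to get $\chi^{0}(M)\in\{0,1\}$, and read off the constraints as a polynomial identity in~$g$. The paper, however, takes the shorter route through $p=0$ rather than $p=2$: since each fixed point contributes $e_{0}=1$ to the numerator, one gets directly
\[
\chi^{0}(M)\,D(g)=N_{0}-(s_{1}g^{k_{1}}+s_{2}g^{k_{2}}+s_{3}g^{k_{3}})+(t_{1}g^{k_{2}+k_{3}}+t_{2}g^{k_{1}+k_{3}}+t_{3}g^{k_{1}+k_{2}})-N_{3}g^{k_{1}+k_{2}+k_{3}},
\]
and setting $\chi^{0}(M)=0$ or $1$ gives exactly your identity~$(\star)$ with no intermediate bookkeeping. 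Your $p=2$ computation lands on the same~$(\star)$ after peeling off the extremal monomials, so nothing is lost, but the $e_{2}$ expansion is extra work you could avoid. On the other hand, you carry out in full the degree argument that extracts possibility~(1) from~$(\star)$ when $N=0$ (including the check that exponent collisions cannot spoil it), whereas the paper leaves that deduction to the reader; your treatment there is a genuine addition.
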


\begin{remark}
When $k_{1}\neq k_{2}+k_{3}$, case (1) will
lead to (\cite{Go}, Theorems 1.1 and 3.2) and case (2) will lead to
(\cite{Go}, Theorems 1.2 and 3.4).
\end{remark}

\begin{proof}
Note that we have the following expression for the Todd class
$\chi^{0}(M)$:
$$\chi^{0}(M)=\frac{N_{0}-(s_{1}g^{k_{1}}+s_{2}g^{k_{2}}+s_{3}g^{k_{3}})+(t_{1}g^{k_{2}+k_{3}}+t_{2}g^{k_{1}+k_{3}}+t_{3}g^{k_{1}+k_{2}})-N_{3}g^{k_{1}+k_{2}+k_{3}}}{(1-g^{k_{1}})(1-g^{k_{2}})(1-g^{k_{3}})}.$$
From Theorem 2.2 we have either
$$N_{0}-(s_{1}g^{k_{1}}+s_{2}g^{k_{2}}+s_{3}g^{k_{3}})+(t_{1}g^{k_{2}+k_{3}}+t_{2}g^{k_{1}+k_{3}}+t_{3}g^{k_{1}+k_{2}})-N_{3}g^{k_{1}+k_{2}+k_{3}}=0,$$
in which case the action is non-Hamiltonian, or \be\begin{split}
& N_{0}-(s_{1}g^{k_{1}}+s_{2}g^{k_{2}}+s_{3}g^{k_{3}})+(t_{1}g^{k_{2}+k_{3}}+t_{2}g^{k_{1}+k_{3}}+t_{3}g^{k_{1}+k_{2}})-N_{3}g^{k_{1}+k_{2}+k_{3}}\\
=& (1-g^{k_{1}})(1-g^{k_{2}})(1-g^{k_{3}}),
\end{split}\nonumber\ee
 in which case the action is Hamiltonian.
\end{proof}

We can also reproduce Tolman-Weitsman's following result (\cite{TW},
Theorem 1).

\begin{theorem}[Tolman-Weitsman]
Let $M^{2n}$ be a compact, connected symplectic manifold, equipped
with a semifree symplectic circle action with non-empty isolated
fixed points. Then this circle action must be Hamiltonian.
\end{theorem}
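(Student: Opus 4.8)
The plan is to push the semifreeness hypothesis through the criterion of Theorem 2.2 and reduce the entire statement to counting fixed points. First I would record the standard fact that a semifree $S^{1}$-action has all isotropy weights equal to $\pm 1$: near a fixed point the action is linear on $\mathbb{C}^{n}$ with weights $k_{1},\dots,k_{n}$, and if some $\abs{k_{j}}\geq 2$ then points on the $j$-th coordinate axis arbitrarily close to the fixed point have nontrivial stabilizer without being fixed, contradicting freeness off the fixed point set. Consequently, if $P_{i}$ has exactly $d_{i}$ negative weights, then $n-d_{i}$ of its weights equal $+1$ and $d_{i}$ of them equal $-1$, so the denominator in Theorem 2.2 factors as
\[
\prod_{j=1}^{n}\bigl(1-g^{k_{j}^{(i)}}\bigr)=(1-g)^{\,n-d_{i}}\bigl(1-g^{-1}\bigr)^{d_{i}}=(-1)^{d_{i}}\,g^{-d_{i}}\,(1-g)^{n},
\]
using $1-g^{-1}=-g^{-1}(1-g)$.

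Next I would substitute this into Theorem 2.2. Grouping the fixed points by their number of negative weights and letting $N_{p}$ denote the number of fixed points with exactly $p$ negative weights (so that $\sum_{p=0}^{n}N_{p}=m$ is the total number of fixed points), the criterion becomes
\[
\chi^{0}(M)=\sum_{i=1}^{m}\frac{1}{\prod_{j=1}^{n}\bigl(1-g^{k_{j}^{(i)}}\bigr)}=\frac{1}{(1-g)^{n}}\sum_{p=0}^{n}N_{p}\,(-1)^{p}g^{p},
\]
and clearing the denominator yields the polynomial identity $\sum_{p=0}^{n}N_{p}(-1)^{p}g^{p}=\chi^{0}(M)(1-g)^{n}$ in the indeterminate $g$; equivalently, expanding the right-hand side gives $N_{p}=\binom{n}{p}\chi^{0}(M)$, consistent with the relation $\chi^{p}(M)=(-1)^{p}N_{p}$ recorded earlier. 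Now specialize at $g=-1$: the left-hand side collapses to $\sum_{p=0}^{n}N_{p}=m$, while the right-hand side is $2^{n}\chi^{0}(M)$, so $m=2^{n}\chi^{0}(M)$.

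To finish: since the fixed point set is non-empty, $m\geq 1$, hence $\chi^{0}(M)\neq 0$; but $\chi^{0}(M)$ is the Todd genus, which by Theorem 2.2 is either $0$ or $1$, so it must equal $1$, and therefore the action is Hamiltonian (and, as a byproduct, $m=2^{n}$, with a unique fixed point of each index $p$).

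I expect no serious obstacle here: all the real content sits in Theorem 2.2, and the only genuine observation is that semifreeness makes every denominator equal to $(1-g)^{n}$ up to the monomial factor $(-1)^{d_{i}}g^{-d_{i}}$, so that the Lefschetz-type criterion degenerates to a single polynomial identity. The mild points to get right are the bookkeeping of signs and powers of $g$ in the factorization of the denominator, and the realization that evaluating the resulting identity at $g=-1$ is exactly what counts fixed points and forces the dichotomy of Theorem 2.2 onto the Hamiltonian side.
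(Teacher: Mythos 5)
Your argument is correct and follows essentially the same route as the paper: semifreeness forces all weights to be $\pm 1$, Theorem 2.2 then gives $\chi^{0}(M)=\frac{\sum_{p=0}^{n}N_{p}(-g)^{p}}{(1-g)^{n}}$, and non-emptiness of the fixed point set forces $\chi^{0}(M)\neq 0$, hence $\chi^{0}(M)=1$ by the dichotomy and the action is Hamiltonian; your evaluation at $g=-1$ (giving $m=2^{n}\chi^{0}(M)$) is only a cosmetic variant of the paper's observation that the numerator is a nonzero polynomial. The one slip is your closing parenthetical: your own identity gives $N_{p}={n\choose p}$ fixed points with exactly $p$ negative weights (so indeed $m=2^{n}$), not a unique fixed point of each index.
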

\begin{proof}
Since the action is semifree, all the integer weights $k_{j}^{(i)}$
are $\pm 1$. We still use $N_{p}$ to denote the number of fixed
points with exactly $p$ negative weights as in Section 2. Hence
$$\chi^{0}(M)=\frac{\sum_{p=0}^{n}N_{p}(-g)^{p}}{(1-g)^{n}}.$$
By assumption, the fixed points are non-empty, hence at least one of
$N_{0},\cdots,N_{n}$ is nonzero, which means $\chi^{0}(M)\neq 0$.
According to Theorem 2.2, $\chi^{0}(M)=1$ and the action is
Hamiltonian. Moreover, $N_{p}={n\choose p},~0\leq p\leq n.$ (Compare
to Lemma 3.1 in \cite{TW}.)
\end{proof}

\begin{remark}
This result was also reproved by Fel'dman
(\cite{Fe}, Corollary 1), whose main tools are the Conner-Floyd
equations for the Todd class.
\end{remark}
\subsection{}
We still use the notations in Section 2. In this subsection we will
prove the following result.
\begin{theorem}\label{main theorem in section 3}
Suppose the almost-complex manifold $(M^{2n},J)$ admits a circle
action preserving the almost-complex structure $J$ and having
isolated fixed points. Then for any $k\in\mathbb{Z}-\{0\},$
$$\sharp\{k_{j}^{(i)}=k~|~1\leq i\leq m,~1\leq j\leq n\}=\sharp\{k_{j}^{(i)}=-k~|~1\leq i\leq m,~1\leq j\leq n\}.$$
Here $\sharp$ denotes the cardinality of a set.
\end{theorem}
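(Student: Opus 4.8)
\emph{Proof proposal.} The plan is to feed the rigidity identity (\ref{identity}) from the proof of Theorem \ref{main theorem} into a single differentiation in the parameter $y$. Regard (\ref{identity}) as an identity of rational functions in the indeterminate $g$ whose coefficients are polynomials in $y$, and differentiate both sides with respect to $y$. On the left, $\chi_{y}(M)$ does not involve $g$, so $\frac{\partial}{\partial y}\chi_{y}(M)$ is some number $A$ that does not depend on $g$. On the right, Leibniz's rule applied to $\prod_{j=1}^{n}\frac{1+yg^{k_{j}^{(i)}}}{1-g^{k_{j}^{(i)}}}$ produces $n$ summands, and in the $l$-th one every remaining factor becomes $\frac{1-g^{k_{j}^{(i)}}}{1-g^{k_{j}^{(i)}}}=1$ once we put $y=-1$. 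Hence, evaluating at $y=-1$,
\[
A=\sum_{i=1}^{m}\sum_{l=1}^{n}\frac{g^{k_{l}^{(i)}}}{1-g^{k_{l}^{(i)}}}.
\]

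Next I would collect the double sum according to weight value. For $k\in\mathbb{Z}\setminus\{0\}$ put $N(k):=\sharp\{(i,l)\mid k_{l}^{(i)}=k\}$, so that only finitely many $N(k)$ are nonzero and the theorem is exactly the statement $N(k)=N(-k)$. The displayed identity reads $\sum_{k\neq 0}N(k)\,\frac{g^{k}}{1-g^{k}}=A$. Using $\frac{g^{k}}{1-g^{k}}=-1+\frac{1}{1-g^{k}}$, and for the negative indices $\frac{1}{1-g^{-k}}=1-\frac{1}{1-g^{k}}$, this rearranges — absorbing all the constant terms, which is legitimate because the sums are finite — to
\[
\sum_{k\ge 1}\frac{N(k)-N(-k)}{1-g^{k}}=D
\]
for some constant $D$ independent of $g$.

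The only place where a genuine argument is required is the final step: a finite combination $\sum_{k\ge 1}b_{k}\,(1-g^{k})^{-1}$ that is constant must have all $b_{k}=0$. I would argue via poles. If some $b_{k}\neq 0$, let $K$ be the largest such index and let $\zeta$ be a primitive $K$-th root of unity. Since $1-\zeta^{k}=0$ precisely when $K\mid k$, and the only multiple of $K$ among $\{k:b_{k}\neq 0\}$ is $K$ itself (any other would be $\ge 2K$), every term of the sum stays finite as $g\to\zeta$ except $b_{K}(1-g^{K})^{-1}$, which blows up; so the sum cannot be constant, a contradiction. (Equivalently, the rational function on the left has a simple pole at $\zeta$ with residue $-\zeta b_{K}/K\neq 0$, whereas a constant has none.) Therefore $b_{k}=N(k)-N(-k)=0$ for all $k\ge 1$, which is the claim. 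The essential input is that the weight multiset is finite — finitely many fixed points, each carrying exactly $n$ weights — which is precisely where the isolated-fixed-point hypothesis enters; I expect the bookkeeping in the rearrangement step, rather than this final pole argument, to be the only slightly delicate part.
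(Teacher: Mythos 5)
Your proof is correct, and while it follows the same overall strategy as the paper (extract from the rigidity identity a $g$-independent expression built from $\frac{1}{1-g^{k}}$, then exploit the pairing $\frac{1}{1-g^{k}}+\frac{1}{1-g^{-k}}=1$), the key technical step is carried out differently. The paper starts from Theorem \ref{main theorem}: it writes $\sum_{i,j}\frac{1}{1-g^{k_{j}^{(i)}}}$ as $\sum_{i}e_{n-1}(1-g^{k_{1}^{(i)}},\dots,1-g^{k_{n}^{(i)}})/\prod_{j}(1-g^{k_{j}^{(i)}})$, expands via the combinatorial Lemma \ref{lemma} into the quantities $\chi^{l}(M)$, and then uses $N_{l}=N_{n-l}$ from (\ref{relation}) to evaluate the constant as $\frac{mn}{2}$. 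You instead differentiate the identity (\ref{identity}) with respect to $y$ and set $y=-1$, so that all spectator factors collapse to $1$ and the constancy of $\sum_{i,l}\frac{g^{k_{l}^{(i)}}}{1-g^{k_{l}^{(i)}}}$ (which differs from the paper's sum only by the constant $mn$) drops out in one line; this bypasses Lemma \ref{lemma} and the evaluation of the constant entirely, since your argument only needs constancy, not the value $\frac{mn}{2}$. You also make explicit, via the pole/residue argument at a primitive $K$-th root of unity, the linear independence of the functions $\frac{1}{1-g^{k}}$ ($k\geq 1$) and the constants, a step the paper passes over tacitly when it concludes the weight-count equality from the constancy of the sum; spelling this out is a genuine (if standard) improvement in completeness. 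What the paper's route buys in exchange is the explicit identity $\sum_{i,j}\frac{1}{1-g^{k_{j}^{(i)}}}=\frac{mn}{2}$, which is of some independent interest, whereas your differentiation trick is shorter and stays closer to the single identity (\ref{identity}).
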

The following corollary is a generalization of (\cite{PT}, Theorem
2) from the case of symplectic circle actions on symplectic
manifolds to that of circle actions on almost-complex manifolds.

\begin{corollary}
Suppose the almost-complex manifold $(M^{2n},J)$ admit a circle
action preserving the almost-complex structure $J$ and having
isolated fixed points. Then
$$\sum_{i=1}^{m}\sum_{j=1}^{n}k_{j}^{(i)}=0.$$
\end{corollary}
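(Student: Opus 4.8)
The plan is to deduce this corollary from the stronger statement that immediately precedes it, Theorem \ref{main theorem in section 3}, to the effect that the multiset of all weights $\{k_j^{(i)}\mid 1\le i\le m,\ 1\le j\le n\}$ is invariant under $k\mapsto -k$. Indeed, set $a_k:=\sharp\{(i,j):k_j^{(i)}=k\}$ and $b_k:=\sharp\{(i,j):k_j^{(i)}=-k\}$ for $k\ge 1$; that symmetry says $a_k=b_k$ for all $k$, and then $\sum_{i,j}k_j^{(i)}=\sum_{k\ge 1}k(a_k-b_k)=0$ --- the corollary being just the first instance of the family $\sum_{i,j}(k_j^{(i)})^{2s+1}=0$, $s\ge 0$. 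So the real content is Theorem \ref{main theorem in section 3}, and I describe how I would establish it from the rigidity result (Theorem \ref{main theorem}) already proved.

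Recall identity (\ref{identity}), $\chi_y(M)=\sum_{i=1}^m\prod_{j=1}^n\frac{1+yg^{k_j^{(i)}}}{1-g^{k_j^{(i)}}}$, valid as an identity of rational functions in the indeterminates $g$ and $y$; in particular its right-hand side does not depend on $g$. I would differentiate both sides with respect to $y$ and then specialize to $y=-1$. The right-hand side remains independent of $g$; on the left, because the value at $y=-1$ of $\prod_j\frac{1+yg^{k_j^{(i)}}}{1-g^{k_j^{(i)}}}$ equals $\prod_j 1=1$, the logarithmic-derivative computation collapses each $n$-fold product over the weights of a fixed point into a sum over those weights, leaving
\[
\sum_{i=1}^m\sum_{j=1}^n\frac{g^{k_j^{(i)}}}{1-g^{k_j^{(i)}}}=\frac{d}{dy}\chi_y(M)\Big|_{y=-1},
\]
which is a constant, independent of $g$. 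The entire point of choosing $y=-1$ is that it is the one specialization linearizing the product over a fixed point's weights into a sum, so that the individual weights become visible.

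To conclude, expand each summand as a Taylor series at $g=0$: for $k\ge 1$ one has $\frac{g^k}{1-g^k}=g^k+g^{2k}+g^{3k}+\cdots$, and for $k=-\ell$ with $\ell\ge 1$ one has $\frac{g^{-\ell}}{1-g^{-\ell}}=\frac{-1}{1-g^\ell}=-1-g^\ell-g^{2\ell}-\cdots$. Every summand is holomorphic at $g=0$, so the left-hand side above has an honest Taylor expansion there and, being constant in $g$, all of its positive-degree coefficients vanish. Reading off the coefficient of $g^r$ for each $r\ge 1$ gives $\sharp\{(i,j):k_j^{(i)}>0,\ k_j^{(i)}\mid r\}=\sharp\{(i,j):k_j^{(i)}<0,\ |k_j^{(i)}|\mid r\}$, i.e. $\sum_{d\mid r}a_d=\sum_{d\mid r}b_d$ for every $r\ge 1$. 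A short induction on $r$ (equivalently, M\"{o}bius inversion over the divisor poset) then forces $a_k=b_k$ for all $k\ge 1$, which is Theorem \ref{main theorem in section 3}; the corollary follows as in the first paragraph.

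The single genuinely non-mechanical step is recognizing that one should look at $\partial_y$ of the $\chi_y$-identity at $y=-1$. The one point needing care is the passage from (\ref{identity}) to the displayed identity above: one must be sure that differentiating in $y$ and then setting $y=-1$ leaves a bona fide identity of rational functions in $g$, whose two sides may be compared Taylor coefficient by Taylor coefficient at $g=0$. This is exactly where the finiteness built into the rigidity theorem enters --- the left-hand side of (\ref{LOC}) is a \emph{finite} Laurent series, so the common value really is a constant rather than a mere formal series --- and once that is granted, the geometric-series expansions, the divisibility bookkeeping and the M\"{o}bius inversion are all routine, so I would not expect any further obstacle.
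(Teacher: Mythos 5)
Your argument is correct, and your reduction of the corollary to Theorem \ref{main theorem in section 3} is exactly the paper's (the corollary is stated there as an immediate consequence of that theorem). Where you genuinely diverge is in how you prove Theorem \ref{main theorem in section 3} itself. The paper extracts the linear-in-weights identity by writing $\sum_{i,j}\frac{1}{1-g^{k_j^{(i)}}}$ as $\sum_i e_{n-1}(1-g^{k_1^{(i)}},\dots,1-g^{k_n^{(i)}})\big/\prod_j(1-g^{k_j^{(i)}})$, expanding via the combinatorial Lemma \ref{lemma}, and evaluating the combination $\sum_l(n-l)(-1)^l\chi^l(M)$ to the explicit constant $mn/2$ using $N_l=N_{n-l}$; you instead apply $\partial_y$ to (\ref{identity}) and set $y=-1$, which linearizes each product over a fixed point's weights in one stroke and gives $\sum_{i,j}\frac{g^{k_j^{(i)}}}{1-g^{k_j^{(i)}}}=\mathrm{const}$ with no symmetric-function bookkeeping; this is legitimate since (\ref{identity}) is an identity of polynomials in $y$ over the rational functions in $g$. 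The two intermediate identities are equivalent, because $\frac{g^k}{1-g^k}=\frac{1}{1-g^k}-1$, so your constant is $-mn/2$ where the paper's is $mn/2$. Your endgame (Taylor expansion at $g=0$, divisor sums, M\"{o}bius inversion) is a fully explicit version of the paper's terse appeal to $\frac{1}{1-g^k}+\frac{1}{1-g^{-k}}=1$: the paper leaves implicit the fact that a finite combination $\sum_{k\ge1}c_k\frac{1}{1-g^k}$ is constant only if all $c_k=0$, which is precisely what your induction over divisors supplies. In short, the paper's route buys the explicit value $mn/2$ and the tie to $\chi^l(M)$ and the counts $N_l$, while yours avoids Lemma \ref{lemma} altogether and makes the concluding linear-independence step airtight; both rest on the same rigidity theorem and both are valid.
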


In order to derive Theorem \ref{main theorem in section 3}, we need
the following lemma.

\begin{lemma}\label{lemma}
$\forall$ $0\leq k\leq n$, we have
$$e_{k}(x_{1}+1,x_{2}+1,\cdots,x_{n}+1)=\sum_{i=0}^{k}{n-i\choose n-k}e_{i}(x_{1},x_{2},\cdots,x_{n}).$$
\end{lemma}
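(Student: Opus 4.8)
The plan is to establish this elementary identity by comparing coefficients in the generating function of the elementary symmetric polynomials. Recall that $\sum_{k=0}^{n} e_{k}(x_{1},\dots,x_{n})\,t^{k} = \prod_{j=1}^{n}(1+tx_{j})$, so $e_{k}(x_{1}+1,\dots,x_{n}+1)$ is the coefficient of $t^{k}$ in $\prod_{j=1}^{n}\bigl(1+t(x_{j}+1)\bigr)$. The key step is the factorization $1+t(x_{j}+1) = (1+t)+tx_{j} = (1+t)\bigl(1+\tfrac{t}{1+t}x_{j}\bigr)$, legitimate in the formal power series ring $\mathbb{Z}[x_{1},\dots,x_{n}][[t]]$ in which $1+t$ is a unit; this gives
\[
\prod_{j=1}^{n}\bigl(1+t(x_{j}+1)\bigr)=(1+t)^{n}\prod_{j=1}^{n}\Bigl(1+\tfrac{t}{1+t}x_{j}\Bigr)=\sum_{i=0}^{n}e_{i}(x_{1},\dots,x_{n})\,t^{i}(1+t)^{n-i}.
\]
Expanding $(1+t)^{n-i}=\sum_{m\geq 0}\binom{n-i}{m}t^{m}$ and extracting the coefficient of $t^{k}$ then produces $\sum_{i=0}^{k}\binom{n-i}{k-i}e_{i}(x_{1},\dots,x_{n})$; since $\binom{n-i}{k-i}=\binom{n-i}{n-k}$, this is exactly the right-hand side of the lemma.

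A purely combinatorial variant would avoid power series altogether: writing $e_{k}(x_{1}+1,\dots,x_{n}+1)=\sum_{|S|=k}\prod_{j\in S}(x_{j}+1)=\sum_{|S|=k}\sum_{T\subseteq S}\prod_{j\in T}x_{j}$ and interchanging the order of summation yields $\sum_{T\subseteq\{1,\dots,n\}}\bigl(\sharp\{S: T\subseteq S,\ |S|=k\}\bigr)\prod_{j\in T}x_{j}$. For a set $T$ with $|T|=i$ the inner cardinality is $\binom{n-i}{k-i}$, independent of the particular $i$-element set $T$, so grouping the terms by $i=|T|$ reproduces $\sum_{i=0}^{k}\binom{n-i}{k-i}e_{i}(x_{1},\dots,x_{n})$, and one invokes $\binom{n-i}{k-i}=\binom{n-i}{n-k}$ as before. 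Either route is short; I would write up the generating-function one.

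I do not anticipate any real obstacle, as the statement is a routine polynomial identity. The only points deserving a word of care are that the manipulation with the rational factor $t/(1+t)$ is valid because it is carried out in $\mathbb{Z}[x_{1},\dots,x_{n}][[t]]$, with the final expression again polynomial in $t$ so that no convergence question arises, and that the summation index may be truncated at $i=k$ because $\binom{n-i}{n-k}=0$ for $i>k$ (using $0\leq i\leq n$), while the term $i=0$ is handled by the convention $e_{0}\equiv 1$ fixed earlier.
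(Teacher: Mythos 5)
Your proof is correct, and your preferred (generating-function) route is genuinely different from the paper's. The paper argues directly on the expansion $\sum_{1\leq i_{1}<\cdots<i_{k}\leq n}(x_{i_{1}}+1)\cdots(x_{i_{k}}+1)$: it writes it as $\sum_{i=0}^{k}a_{i}e_{i}$ and pins down $a_{i}$ by counting that the expansion contains $\binom{n}{k}\binom{k}{i}$ degree-$i$ monomials while $e_{i}$ has $\binom{n}{i}$ of them, so $a_{i}=\binom{n}{k}\binom{k}{i}/\binom{n}{i}=\binom{n-i}{n-k}$ --- a count that implicitly uses the symmetry to know every monomial of $e_{i}$ occurs with the same multiplicity. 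Your combinatorial variant is essentially this same idea, but cleaner: interchanging the summation over $S$ and $T\subseteq S$ gives the multiplicity $\binom{n-i}{k-i}$ of each monomial directly, with no division-by-symmetry step. Your main route instead reads the identity off from $\prod_{j}\bigl(1+t(x_{j}+1)\bigr)=\sum_{i}e_{i}(x)\,t^{i}(1+t)^{n-i}$ via the unit $1+t$ in $\mathbb{Z}[x_{1},\dots,x_{n}][[t]]$; this costs a line of formal-power-series bookkeeping but buys a mechanical coefficient extraction (no counting argument at all) and generalizes immediately to shifts $x_{j}+c$. All three arguments are valid; the binomial conversion $\binom{n-i}{k-i}=\binom{n-i}{n-k}$ and the truncation at $i=k$ are handled correctly in your write-up.
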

\begin{proof}
\be\begin{split} e_{k}(x_{1}+1,x_{2}+1,\cdots,x_{n}+1)&=\sum_{1\leq
i_{1}<\cdots<i_{k}\leq
n}(x_{i_{1}}+1)(x_{i_{2}}+1)\cdots(x_{i_{k}}+1)\\
& :=\sum_{i=0}^{k}a_{i}\cdot e_{i}(x_{1},x_{2},\cdots,x_{n}).
\end{split}\nonumber\ee It suffices to determine the coefficients $a_{i}$. Note
that there are ${n\choose k}{k\choose i}$ monomials of degree $i$ in
the expression $\sum_{1\leq i_{1}<\cdots<i_{k}\leq
n}(x_{i_{1}}+1)(x_{i_{2}}+1)\cdots(x_{i_{k}}+1)$. Hence,
$$a_{i}=\frac{{n\choose
k}{k\choose i}}{{n\choose i}}={n-i\choose n-k}$$ as
$e_{i}(x_{1},\cdots,x_{n})$ has ${n\choose i}$ monomials.
\end{proof}

\emph{Proof of Theorem \ref{main theorem in section 3}.}

\be\begin{split}
\sum_{i=1}^{m}\sum_{j=1}^{n}\frac{1}{1-g^{k_{j}^{(i)}}}&=\sum_{i=1}^{m}\frac{e_{n-1}(1-g^{k_{1}^{(i)}},\cdots,1-g^{k_{n}^{(i)}})}{\prod_{j=1}^{n}(1-g^{k_{j}^{(i)}})}\\
&=\sum_{l=0}^{n-1}(n-l)(-1)^{l}\sum_{i=1}^{m}\frac{e_{l}(g^{k_{1}^{(i)}},\cdots,g^{k_{n}^{(i)}})}{\prod_{j=1}^{n}(1-g^{k_{j}^{(i)}})}~~(\textrm{by
Lemma \ref{lemma}})\\
&=\sum_{l=0}^{n-1}(n-l)(-1)^{l}\chi^{l}(M)\\
&=\sum_{l=0}^{n-1}(n-l)N_{l}\\
&=\frac{n}{2}\sum^{n}_{l=0}N_{l}~~(\textrm{by}~N_{l}=N_{n-l})\\
&=\frac{mn}{2}.
\end{split}\nonumber\ee
Note that for any $k\in\mathbb{Z}-\{0\},$
$\frac{1}{1-g^{k}}+\frac{1}{1-g^{-k}}=1$. So what we have showed
implies that, for any $k\in\mathbb{Z}-\{0\},$
$$\sharp\{k_{j}^{(i)}=k~|~1\leq i\leq m,~1\leq j\leq n\}=\sharp\{k_{j}^{(i)}=-k~|~1\leq i\leq m,~1\leq j\leq n\}.$$

\bibliographystyle{amsplain}
{\bf Acknowledgements.}~The author thanks the referee for his/her
very careful reading of the earlier version of this paper and many
fruitful comments and suggestions, which improve the quality of this
paper.

\end{document}